\newcommand{\nospacepunct}[1]{\makebox[0pt][l]{\,#1}} 
\theoremstyle{plain}
\newtheorem{thm}{Theorem}[section]
\newtheorem*{thm*}{Theorem}
\newtheorem{cor}[thm]{Corollary}
\newtheorem*{cor*}{Corollary}
\newtheorem{prop}[thm]{Proposition}
\newtheorem*{prop*}{Proposition}
\newtheorem{lem}[thm]{Lemma}
\newtheorem*{lem*}{Lemma}
\newtheorem*{ex*}{Example}
\newtheorem*{q*}{Question}
\newtheorem*{conj*}{Conjecture}
\theoremstyle{definition}
\newtheorem{defn}[thm]{Definition}
\newtheorem*{defn*}{Definition}
\theoremstyle{remark}
\newtheorem{rem}[thm]{Remark}
\newtheorem*{rem*}{Remark}
\theoremstyle{plain}
\newenvironment{manualtheorem}[1]{%
  \manualtheoreminner
}{\endmanualtheoreminner}
\newenvironment{manualprop}[1]{%
  \manualpropinner
}{\endmanualpropinner}
\newenvironment{manualcor}[1]{%
  \manualcorinner
}{\endmanualcorinner}
\Crefname{defn}{Definition}{Definitions}
\newcommand{\pres}[2]{\langle{#1}\mid{#2}\rangle}
\newcommand{\F}{\mathtt{F}}
\newcommand{\FP}{\mathtt{FP}}
\newcommand{\btwo}[1]{b^{(2)}_{#1}}
\newcommand{\ltwo}{\ell^2}
\newcommand{\C}{\mathbb{C}}
\newcommand{\N}{\mathbb{N}}
\newcommand{\Q}{\mathbb{Q}}
\newcommand{\R}{\mathbb{R}}
\newcommand{\Z}{\mathbb{Z}}
\newcommand{\inv}{^{-1}}
\newcommand{\ab}{^{\text{ab}}}
\DeclareMathOperator{\Isom}{Isom}
\DeclareMathOperator{\Ore}{Ore}
\DeclareMathOperator{\Tor}{Tor}
\newcounter{samcomments}
\title{Algebraic fibring of a hyperbolic $7$-manifold}
\author{Sam P. Fisher}
\email{sam.fisher@maths.ox.ac.uk}
\address{University of Oxford, United Kingdom}
\date{\today}
\begin{document}

\maketitle

\begin{abstract}
We show there is a finite-volume, hyperbolic $7$-manifold that algebraically fibres with finitely presented kernel of type $\mathtt{FP}(\Q)$. This manifold is a finite cover of the one constructed by Italiano--Martelli--Migliorini.
\end{abstract}

\section{Introduction}\label{sec:intro}

A group $G$ \textit{algebraically fibres} if there is an epimorphism $G \longrightarrow \Z$ with finitely generated kernel, and a manifold \textit{algebraically fibres} if its fundamental group algebraically fibres. Let $F \longrightarrow M \longrightarrow S^1$ be a topological fibration of a manifold $M$ with connected fibre $F$. Then the low-dimensional terms of the long exact sequence of homotopy groups give
\[
    \begin{tikzcd}
\pi_2(S^1) \arrow[d, equal] \arrow[r] & \pi_1(F) \arrow[d, equal] \arrow[r] & \pi_1(M) \arrow[d, equal] \arrow[r] & \pi_1(S^1) \arrow[d, "\cong", no head] \arrow[r] & \pi_0(F) \arrow[d, equal] \\
1 \arrow[r]                                                  & \pi_1(F) \arrow[r]                                & \pi_1(M) \arrow[r]                                & \Z \arrow[r]                                                 & 1 \nospacepunct{,}                    
\end{tikzcd}
\]
so algebraic fibrations can be viewed as weaker versions of fibrations over the circle, provided $\pi_1(F)$ is finitely generated. In some cases, one can deduce the existence of a topological fibration from the existence of an algebraic fibration. For instance, Stallings showed that if $G$ is isomorphic to the fundamental group of a closed $3$-manifold $M$ and there is an algebraic fibration $G \longrightarrow \Z$, then it is induced by a fibration $M \longrightarrow S^1$ \cite{Stallings3mflds}. When $M$ is a smooth manifold of dimension $\geqslant 6$ and $\pi_1 M \cong \Z$, Browder and Levine showed that every algebraic fibration of $M$ is induced by a smooth fibration $M \longrightarrow S^1$, provided the homotopy groups of $M$ are all finitely generated \cite{BrowderLevineFibering1966}. For general $\pi_1 M$ and $\dim M \geqslant 6$, \cite{FarrellFibring1971}, Farrell gave necessary and sufficient $K$-theoretic conditions for an algebraic fibration of $M$ to lift to a smooth fibration $M \longrightarrow S^1$.

In recent work, Italiano, Martelli, and Migliorini gave the first examples of hyperbolic manifolds of dimension $\geqslant 5$ that algebraically fibre \cite[Theorem 1]{italiano2021hyperbolic}.

\begin{thm*}[Italiano, Martelli, Migliorini]
For $5 \leqslant n \leqslant 8$, there are cusped, finite-volume, hyperbolic $n$-manifolds $M^n$ that algebraically fibre. Moreover, the kernel of the algebraic fibration is finitely presented in dimensions $7$ and $8$.
\end{thm*}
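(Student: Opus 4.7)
The plan is to realise each $M^n$ as a hyperbolic manifold carrying a right-angled polytopal structure on which one can run Bestvina--Brady Morse theory. For each $n \in \{5,6,7,8\}$, I would first build $M^n$ as a torsion-free finite-index cover of the reflection orbifold of a right-angled Coxeter polytope $P^n \subset \mathbb{H}^n$; examples include the ideal rectified $n$-simplex for small $n$, or arithmetic right-angled polytopes arising from suitable quadratic forms over $\mathbb{Q}$. This presents $M^n$ as a union of copies of $P^n$ glued along codimension-one faces, with all dihedral angles equal to $\pi/2$.

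Second, the right-angled gluing naturally produces a locally $\mathrm{CAT}(0)$ cube complex $X$ homotopy equivalent to $M^n$, obtained by inserting a cube at each vertex of the tessellation in the standard way. An algebraic fibration $\phi \colon \pi_1 M^n \longrightarrow \mathbb{Z}$ can then be obtained from a diagonal character: orient the walls of the tessellation in a way compatible with the gluing, and let $\phi$ count signed wall crossings along a loop. Bestvina--Brady theory then relates the finiteness properties of $\ker \phi$ to the connectivity of the ascending and descending links at each vertex of $X$: $\ker \phi$ is finitely generated precisely when these links are connected, and finitely presented precisely when they are simply connected.

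The main obstacle is the combinatorial verification that the chosen orientation produces ascending and descending links with the required connectivity at every vertex type of $M^n$. Vertex links in $X$ are determined by the local combinatorics of $P^n$, so the problem reduces to partitioning the facets of each vertex link into ``up'' and ``down'' pieces whose induced subcomplexes are connected (for $n = 5, 6$) or simply connected (for $n = 7, 8$). In low dimensions an ad hoc colouring of the walls may suffice, but as $n$ grows the vertex links become more intricate and one most likely has to pass to a further finite cover in order to enlarge the pool of admissible characters, and then carry out a (possibly computer-assisted) case-by-case check that the simple-connectivity condition holds at every vertex figure. I expect this connectivity check in dimensions $7$ and $8$, rather than the construction of $M^n$ itself, to be the real content of the theorem.
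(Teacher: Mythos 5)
This theorem is not proved in the present paper at all: it is quoted as background from Italiano--Martelli--Migliorini \cite{italiano2021hyperbolic}, and the paper only recalls the underlying construction of $M^n$ in \cref{sec:algFib} (the colouring/Vesnin construction applied to the right-angled Agol--Long--Reid polytopes $P^n$, which exist precisely for $3 \leqslant n \leqslant 8$). Your outline nevertheless matches the strategy of the cited source essentially step for step: gluing copies of $P^n$ via a colouring, passing to the dual cube structure, defining the character by a choice of orientations/states of the facets in the style of Jankiewicz--Norin--Wise, and then verifying (by a computer-assisted check) that the ascending and descending links are connected, and simply connected in dimensions $7$ and $8$ --- which is exactly the crux you identify.
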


In a followup article \cite{italiano2021hyperbolic5}, they construct a smooth fibration of $M^5$ over the circle, providing the first example of a hyperbolic $5$-manifold that fibres over the circle. The manifolds $M^n$ are constructed via gluings of the remarkable right-angled $n$-dimensional hyperbolic polytopes $P^n$, which exist for $3 \leqslant n \leqslant 8$. The polytopes $P^n$ were introduced by Agol, Long, and Reid \cite{AgolLongBianchi01} and have many interesting properties; for more information, we refer the reader to the work of Potyagailo--Vinberg \cite{PotyagailoVinberg05} and Everitt--Ratcliffe--Tschantz \cite{EverittRatcliffe2012}.

Now that examples of hyperbolic $5$-manifolds fibring over the circle are known, a natural direction of research is to look for fibring hyperbolic $7$-manifolds (the Chern--Gauss--Bonnet theorem implies there are no even-dimensional examples), with the most obvious candidates being $M^7$ or one of its finite covers. The main result of this paper does not confirm this, though it does provide strong evidence that a finite cover of $M^7$ fibres over the circle.

\begin{manualtheorem}{\ref{thm:mfldFib}}
    There exists a finite-volume, cusped, hyperbolic $7$-manifold $X^7$ that algebraically fibres with finitely presented kernel of type $\mathtt{FP}(\Q)$. The manifold $X^7$ is a finite cover of the manifold $M^7$ constructed in \cite{italiano2021hyperbolic}.
\end{manualtheorem}

We explain some of the terminology in the statement of the theorem. If there is an epimorphism $\varphi \colon G \longrightarrow \Z$ and $\ker \varphi$ has a finiteness property $\mathcal{P}$ (see \cref{subsec:finprop} for definitions of finiteness properties), we write \textit{$G$ algebraically fibres with kernel of type $\mathcal{P}$}. Similarly, if $M$ is a manifold, then we write \textit{$M$ algebraically fibres with kernel of type $\mathcal{P}$} if its fundamental group algebraically fibres with kernel of type $\mathcal{P}$.

The strategy to prove \cref{thm:mfldFib} is the following. Some straightforward observations show that $\pi_1(M^7)$ is virtually RFRS and that all of its $\ell^2$-Betti numbers vanish (see \cref{lem:mfldRFRS}). By \cite[Theorem 6.6]{Fisher2021improved}, this implies that $\pi_1(M^7)$ has a finite-index subgroup that fibres with kernel of type $\mathtt{FP}(\Q)$. The problem is that this algebraic fibration need not coincide with the one constructed by Italiano--Martelli--Migliorini, so we cannot immediately conclude \cref{thm:mfldFib}. This is remedied by the following general result, which shows that if a RFRS group has many virtual algebraic fibrations with kernels having different finiteness properties, then there is a single virtual algebraic fibration whose kernel has all of these finiteness properties simultaneously.

\begin{manualprop}{\ref{prop:fibring}}
    Let $G$ be a virtually RFRS group and let $\mathbb F_1, \dots, \mathbb F_k$ be skew-fields. Assume that
    \begin{enumerate}
        \item $G$ is of type $\mathtt F_m$ and of type $\mathtt{FP}_{n_i}(\mathbb F_i)$ for some $m, n_1, \dots, n_k \in \N$;
        \item for $i = 0, 1, \dots, k$ there are finite-index subgroups $H_i \leqslant G$ and epimorphisms $\varphi_i \colon H_i \longrightarrow \Z$ such that $\ker \varphi_0$ is of type $\mathtt F_m$ and $\ker \varphi_i$ is of type $\mathtt{FP}_{n_i}(\mathbb F_i)$ for each $i = 1, \dots, k$.  
    \end{enumerate}
    Then there is a finite-index subgroup $H \leqslant G$ and an epimorphism $\varphi \colon H \longrightarrow \Z$ such that $\ker \varphi$ is of type $\mathtt F_m$ and of type $\mathtt{FP}_{n_i}(\mathbb F_i)$ for each $i = 1, \dots, k$.
\end{manualprop}

\begin{rem*}
The techniques used to prove \cref{thm:mfldFib} also provide examples of finite-volume, cusped, hyperbolic $n$-manifolds $X^n$ for $n = 5,8$ such that
\begin{enumerate}
    \item[(3)] $X^5$ algebraically fibres with kernel of type $\FP(\Q)$;
    \item[(4)] $X^8$ algebraically fibres with finitely presented kernel of type $\FP_3(\Q)$;
\end{enumerate}
where $X^n$ is a finite cover of $M^n$. Statement (4) is sharp in the sense that no finite cover of $M^8$ fibres with kernel of type $\FP_4(\Q)$.

Though we provide new methods for obtaining the results (3) and (4), they are already known. In \cite[Theorem 1]{italiano2021hyperbolic5}, Italiano, Martelli, and Migliorini showed that $M^5$ topologically fibres, and in particular all of its finite covers algebraically fibre with kernel of type $\F$. Moreover, Llosa Isenrich, Martelli, and Py have shown that $M^8$ algebraically fibres with kernel of type $\F_3$ but not $\FP_4(\Q)$ \cite[Example 16, Theorem 24]{isenrich2021hyperbolic}.
\end{rem*}

Finally, we note that the algebraic fibration of $M^6$ can be improved, modulo passing to a finite cover. This is an easy corollary of previous work of the author \cite[Theorem 6.6]{Fisher2021improved} and does not rely on \cref{prop:fibring}.

\begin{manualcor}{\ref{cor:6mfld}}
    There is a finite cover $X^6$ of $M^6$ that algebraically fibres with kernel of type $\mathtt{FP}_2(\Q)$. Moreover, no finite cover of $M^6$ algebraically fibres with kernel of type $\mathtt{FP}_3(\Q)$.
\end{manualcor}

\subsection*{Structure} In \cref{sec:prelims}, we provide some background on the tools that will be used in the proof of the main theorem. We define finiteness properties of groups, introduce $\Sigma$-invariants, the Novikov ring, and $\ell^2$-invariants. We then prove \cref{prop:fibring} and \cref{thm:mfldFib} in \cref{sec:algFib}.

\subsection*{Acknowledgements} The author would like to thank Dawid Kielak for the many helpful conversations and suggestions.

This work has received funding from the European Research Council (ERC) under the European Union's Horizon 2020 research and innovation programme (Grant agreement No. 850930).

\section{Preliminaries}\label{sec:prelims}

\subsection{Finiteness properties} \label{subsec:finprop}

We begin with the definition of the homotopic finiteness properties.

\begin{defn}[Type $\F$]
    A group $G$ is of (i) \textit{type $\F_n$}; (ii) \textit{type $\F_\infty$}; (iii) \textit{type $\F$} if $G$ has a classifying space (i) with finite $n$-skeleton; (ii) with finite $n$-skeleton for every $n \in \N$; (iii) that is compact.
\end{defn}

Note that being of type $\F_1$ (resp.~$\F_2$) is equivalent to being finitely generated (resp.~finitely presented). The following finiteness properties are homological analogues of those defined above.

\begin{defn}[Type $\FP$]
Let $R$ be a ring and $M$ an $R$-module. Then $M$ is of (i) \textit{type $\FP_n(R)$}; (ii) \textit{type $\FP_\infty(R)$}; (iii) \textit{type $\FP(R)$} if there is a projective resolution $P_\bullet \longrightarrow M$ where (i) $P_i$ is finitely generated for $i \leqslant n$; (ii) $P_n$ is finitely generated for every $n \in \N$; (iii) $P_n$ is finitely generated for every $n \in \N$ and there is some $k$ such that $P_n = 0$ for $n \geqslant k$.

A group $G$ is of \textit{type $\mathtt{FP}_n(R)$} if the trivial $RG$-module $R$ is of type $\mathtt{FP}_n(RG)$. Similarly, $G$ is of \textit{type $\mathtt{FP}_\infty(R)$} (resp.~\textit{type $\mathtt{FP}(R)$}) if $R$ is of type $\mathtt{FP}_\infty(RG)$ (resp.~$\mathtt{FP}(RG)$).
\end{defn}

It is not hard to prove that if a group is of type $\F_n$ then it is of type $\FP_n(R)$ for any ring $R$, and similarly for the other finiteness properties defined above. Moreover, if $G$ is of type $\FP_1(R)$ for some nonzero ring $R$, then $G$ is finitely generated. However, it is not true that being of type $\FP_n(R)$ implies being of type $F_n$ for $n \geqslant 2$, as was first shown by Bestvina and Brady in \cite{BestvinaBrady}.

\subsection{\texorpdfstring{$\Sigma$}{}-invariants and the Novikov ring}

For the remainder of \cref{sec:prelims}, $R$ will denote an associative, unital ring, and $G$ will denote a finitely generated group.

\begin{defn}
A \textit{character} is a homomorphism $\chi \colon G \longrightarrow \R$, where $\R$ denotes the additive group of real numbers. We define an equivalence relation on the set of nonzero characters by declaring characters $\chi$ and $\chi'$ to be equivalent if and only if there is a positive constant $\alpha > 0$ such that $\alpha \chi = \chi'$. The equivalence class of $\chi$ is denoted $[\chi]$. The \textit{character sphere} $S(G)$ of $G$ is the set of equivalence classes of characters on $G$. Note that $S(G)$ naturally has the topology of the sphere $S^{n-1}$, where $n$ is the first Betti number of $G$. When $N \leqslant G$ is a subgroup, we will also be interested in the subspace $S(G,N) := \{ [\chi] \in S(G) : \chi(N) = 0 \}$.
\end{defn}

\begin{defn}[$\Sigma$-invariants]
Let $M$ be an $RG$-module. Define
\[
\Sigma^n_R(G;M) = \{ [\chi] \in S(G) : M \ \text{is of type} \ \FP_n(RG_\chi)  \},
\]
where $G_\chi$ is the monoid $\{ g \in G : \chi(g) \geqslant 0 \}$.
\end{defn}

The invariants $\Sigma^n_R(G;M)$ are generalisations of the classical Bieri--Neumann--Strebel invariant \cite{BNSinv87} and its higher-dimensional analogues \cite{BieriRenzValutations}. The only difference is that we work over a general ring $R$, while the higher BNS invariants are defined over $\Z$. We note the following important facts about the invariants $\Sigma^n_R(G;M)$.

\begin{lem}[Bieri--Renz]\label{lem:hmlgBNS}
\begin{enumerate}
    \item $\Sigma^n_R(G;M)$ is open in $S(G)$ for all $n \in \N$ \cite[Theorem A]{BieriRenzValutations};
    \item if $G$ is of type $\FP_n(R)$ and $N \triangleleft G$ is a normal subgroup with Abelian quotient $G/N$, then $N$ is of type $\FP_n(R)$ if and only if $S(G,N) \subseteq \Sigma^n_R(G;R)$ \cite[Theorem B]{BieriRenzValutations}.
\end{enumerate}
\end{lem}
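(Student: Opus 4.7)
My plan is to show that membership in $\Sigma^n_R(G;M)$ is certified by a finite amount of data that survives small perturbations of $\chi$. Given $[\chi] \in \Sigma^n_R(G;M)$, I would choose a partial resolution $F_n \to \cdots \to F_0 \to M$ by finitely generated free $RG_\chi$-modules; the boundary maps are described by finite matrices whose entries involve only finitely many group elements in $G_\chi$. A shifting trick---replacing the basis of $F_i$ by its translate under $g^i$ for a fixed $g \in G$ with $\chi(g)$ sufficiently large---transforms each matrix entry into a sum of group elements whose $\chi$-values are bounded below by a strictly positive constant $\varepsilon$. Any character $\chi'$ close enough to $\chi$ in $S(G)$ therefore still evaluates positively on all of the (finitely many) group elements appearing, so the same matrices define a partial free resolution over $RG_{\chi'}$.

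\textbf{Part (2), forward direction.} Given a partial resolution of $R$ by finitely generated free $RN$-modules and a character $[\chi] \in S(G,N)$, one has $N \subseteq G_\chi$, and $RG_\chi$ is free as a left $RN$-module on any set of coset representatives for $G_\chi/N$. The induced character $\bar\chi \colon G/N \to \R$ has positive cone $G_\chi/N$, a submonoid of the finitely generated abelian group $G/N$. I would construct a finite free resolution of $R$ over $RG_\chi$ by combining the $RN$-resolution with a finite free $R[G_\chi/N]$-resolution of $R$ (built from the Koszul-like structure of the positive cone in $G/N$ and the fact that $G$ is of type $\FP_n(R)$) via a double complex associated to the ``extension'' $N \to G_\chi \to G_\chi/N$; its total complex is the desired resolution.

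\textbf{Part (2), reverse direction.} The character sphere $S(G,N)$ is naturally identified with $S(G/N) \cong S^{k-1}$, where $k$ is the torsion-free rank of $G/N$, and is therefore compact. The hypothesis together with openness from part (1) gives a finite open covering of $S(G,N)$ by sets supporting partial resolutions over the respective monoid rings $RG_{\chi_i}$, and one may arrange the positive cones $G_{\chi_i}$ to jointly cover $G$. I would then glue these finitely many partial resolutions via a Mayer--Vietoris style argument to produce a partial resolution of $R$ by finitely generated projective $RN$-modules, using that the monoid rings intersect in $RN$ (up to the chosen $\chi_i$).

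\textbf{Main obstacle.} The hardest step will be the gluing in the reverse direction of (2): combining finitely many partial resolutions over different submonoid rings of $RG$ into a single resolution over $RN$ requires a careful organization of how these monoid rings fit together inside $RG$ and a delicate choice of antipodal finite covering of $S(G,N)$ by positive cones so that the Mayer--Vietoris pieces align into an $RN$-resolution. The shifting trick in part (1) is more straightforward once one sees it, and the forward direction of (2) reduces to a standard double-complex computation once the monoid-ring resolution is in hand.
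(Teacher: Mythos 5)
This lemma is not proved in the paper at all: it is quoted from Bieri--Renz, with the remark that their proof for $R=\Z$ carries over verbatim to a general ring. So the relevant comparison is with the actual Bieri--Renz argument, and measured against that your sketch has two genuine gaps. For part (1), your certificate of membership is a partial free resolution over the monoid ring $RG_\chi$ whose matrices you then reinterpret over $RG_{\chi'}$. Even granting that all group elements in the support of the matrices satisfy $\chi'>0$, the resulting complex of free $RG_{\chi'}$-modules is merely a complex: exactness over $RG_\chi$ does not transfer to exactness over $RG_{\chi'}$, since neither ring is flat over their common subring in general, and you give no argument for exactness. The shifting trick also does not do what you want: if $\chi(g)>0$ then $g^{-1}\notin G_\chi$, so $\{g^i e_\alpha\}$ is not a basis of $F_i$ over $RG_\chi$ but only generates the proper submodule $g^iF_i$. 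The Bieri--Renz proof avoids both problems by using a different, genuinely perturbation-stable certificate: a free resolution of $M$ over the whole group ring $RG$, finitely generated in degrees $\leqslant n$, together with a chain endomorphism lifting $\id_M$ that strictly increases the valuation $v_\chi$ in those degrees. That this datum is equivalent to $\FP_n$ over $RG_\chi$ is the technical heart of their paper, and it is what makes openness a matter of finitely many strict inequalities.

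For part (2), the forward direction is essentially right (it is a double-complex/extension argument for $\FP_n$ over the monoid extension $N\to G_\chi\to G_\chi/N$), though the finite free resolution of $R$ over $R[G_\chi/N]$ has nothing to do with $G$ being of type $\FP_n(R)$; it is the separate fact that $\Sigma^m(Q)=S(Q)$ for $Q$ finitely generated abelian, which needs an argument of its own when $\bar\chi$ is irrational and $Q_{\bar\chi}$ is not finitely generated as a monoid. The reverse direction, however, is where your proposal breaks down, as you yourself suspect. There is no Mayer--Vietoris sequence that glues resolutions over the various subrings $RG_{\chi_i}$ of $RG$ into a resolution over $RN$; these rings are not localizations of a common ring along an open cover, and $\Tor$ over their intersection is not controlled by $\Tor$ over the pieces. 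Bieri--Renz instead again use the valuation criterion: by openness and compactness of $S(G,N)$ they extract finitely many valuation-raising chain endomorphisms $\varphi_1,\dots,\varphi_m$ of a single $RG$-resolution such that every $[\chi]\in S(G,N)$ is strictly raised by some $\varphi_j$, and from this they build a finitely generated free $RN$-resolution degree by degree. Without that mechanism (or a substitute for it), the gluing step in your sketch is a restatement of the difficulty rather than a proof.
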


In \cite{BieriRenzValutations}, Bieri and Renz only state the above result in the case $R = \Z$, however their proof carries over without modification to the more general setting.

\begin{defn}[Novikov ring]
Let $\chi \colon G \to \R$ be a character. Then the associated \textit{Novikov ring} is
\[
    \widehat{RG}^\chi = \left\{ \sum_{g \in G} r_g g : \chi\inv(\left]-\infty, t\right]) \cap \{g \in G : r_g \neq 0\} \ \text{is finite for all} \ t \in \R \right\}.
\]
The elements of $\widehat{RG}^\chi$ should be thought of as formal sums of elements of $G$ with coefficients in $R$ that are finitely supported below every height, where $\chi$ is the height function. We give $\widehat{RG}^\chi$ a ring structure by defining $rg + r'g := (r + r')g$ and $rg \cdot r'g' := rr' gg'$ for $r,r' \in R$, $g,g' \in G$, and extending multiplication to all of $\widehat{RG}^\chi$ in the obvious way.
\end{defn}

The $\Sigma$-invariants can be characterised in terms of vanishing group homology with coefficients in the Novikov ring.

\begin{lem}[Bieri--Renz, Schweitzer]\label{lem:sigmaChar}
Let $M$ be a left $RG$-module of type $\FP_n(RG)$ and let $\chi \colon G \to \R$ be a non-trivial character. The following are equivalent:
\begin{enumerate}
    \item\label{item:SigmaInv} $[\chi] \in \Sigma_R^n(G;M)$;
    \item\label{item:TorCond} $\Tor_p^{RG}(\widehat{RG}^\chi, M) = 0$ for all $p \leqslant n$.
\end{enumerate}
\end{lem}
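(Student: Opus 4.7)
The plan is to prove the equivalence by induction on $n$, following Sikorav's argument for $\Sigma^1$ and its generalisation by Bieri--Renz to higher $\Sigma^n$, with Schweitzer's modifications to allow coefficients in an arbitrary ring $R$.

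The algebraic engine of the proof is the following observation about the Novikov ring. If $g \in G$ satisfies $\chi(g) > 0$, then the series $\sum_{k \geqslant 0} g^k$ lies in $\widehat{RG}^\chi$, since its support $\{g^k : k \geqslant 0\}$ is finite below any given $\chi$-height, and it is a two-sided inverse of $1 - g$. Thus $\widehat{RG}^\chi$ is a ring in which $1 - g$ becomes a unit for every $\chi$-positive $g \in G$, which is precisely the algebraic property that makes the Novikov ring detect finite generation over the submonoid ring $RG_\chi$.

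For the base case $n = 0$, one shows directly that $M$ is finitely generated over $RG_\chi$ if and only if $\widehat{RG}^\chi \otimes_{RG} M = 0$. The forward direction exploits the invertibility of $1 - g$ above to telescope tensors to zero. The converse requires a truncation argument: from the vanishing of the tensor product, one extracts, using the finite generation of $M$ over $RG$, a finite $RG_\chi$-generating set for $M$. For the inductive step $n - 1 \to n$, pick a surjection $F \twoheadrightarrow M$ with $F$ a finitely generated free $RG$-module and kernel $K$, and combine two facts: (i) the standard characterisation that $M$ is $\FP_n(RG_\chi)$ if and only if $M$ is $\FP_0(RG_\chi)$ and $K$ is $\FP_{n-1}(RG_\chi)$ (with some care, as $F$ is free over $RG$ rather than over $RG_\chi$); and (ii) the long exact sequence in $\Tor^{RG}(\widehat{RG}^\chi, -)$, which gives $\Tor_p^{RG}(\widehat{RG}^\chi, M) \cong \Tor_{p-1}^{RG}(\widehat{RG}^\chi, K)$ for $p \geqslant 2$ together with analogous low-degree identifications. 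The inductive hypothesis applied to $K$ then closes the argument.

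The main obstacle is the converse direction of the base case, in which one must pass from a homological vanishing condition over the large ring $\widehat{RG}^\chi$---which a priori involves infinite Novikov series---to an effective finite $RG_\chi$-generating set for $M$. The subtlety is that a single vanishing relation in $\widehat{RG}^\chi \otimes_{RG} M$ encodes infinitely many identities in $RG$, and a careful truncation at an appropriate $\chi$-height, coupled with the finite generation of $M$ over $RG$, is required to extract usable finitary data.
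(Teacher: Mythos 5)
The paper does not actually prove this lemma: it defers to \cite[Theorem 5.3]{Fisher2021improved} and to Bieri--Renz and Schweitzer's appendix to Bieri's deficiency paper. Your sketch follows the standard Sikorav/Bieri--Renz line that those references use, and your base case is sound: for an $RG_\chi$-generating set $x_1,\dots,x_m$ one gets a matrix identity $(I-A)\mathbf{x}=0$ with $A$ supported at positive $\chi$-height, and the Neumann series $\sum_k A^k$ inverts $I-A$ over $\widehat{RG}^\chi$, killing each $1\otimes x_j$; the converse is the usual truncation-and-bootstrap argument.

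The inductive step, however, has a genuine gap, located exactly where you write ``with some care''. Your fact (i) is false as stated. Take $G=\Z=\langle t\rangle$, $\chi(t)=1$, $R$ a field, $M=R$ the trivial module, and $F=RG$, so that $K=(t-1)RG\cong RG$. Then $M$ is of type $\FP_\infty(RG_\chi)=\FP_\infty(R[t])$, yet $K\cong R[t,t^{-1}]$ is not even finitely generated over $R[t]$, i.e.\ not of type $\FP_0(RG_\chi)$. The characterisation ``$M$ is $\FP_n$ iff the kernel of a presentation is $\FP_{n-1}$'' requires the presenting module to be finitely generated projective over the ring in question, here $RG_\chi$; a finitely generated free $RG$-module is neither finitely generated nor, in general, projective over $RG_\chi$ (for $G=\Z$ it is the localisation $R[t,t^{-1}]$ of $R[t]$), so neither that characterisation nor Schanuel's lemma transfers. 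The same example breaks the low-degree end of your fact (ii): $\Tor_1^{RG}(\widehat{RG}^\chi,M)$ is the kernel of $\widehat{RG}^\chi\otimes_{RG}K\longrightarrow\widehat{RG}^\chi\otimes_{RG}F$, not $\widehat{RG}^\chi\otimes_{RG}K$ itself (which here equals $\widehat{RG}^\chi\neq 0$ while $\Tor_1^{RG}(\widehat{RG}^\chi,M)=0$), so the inductive hypothesis cannot be fed the module $K$ in degree $0$. Closing the induction requires genuinely bridging $RG_\chi$-presentations and $RG$-resolutions --- for instance via the Bieri--Renz valuation formalism on free resolutions, or by inducing a finitely generated free $RG_\chi$-presentation up to $RG$ and exploiting that $RG$ is flat over $RG_\chi$ (it is a directed union of the free rank-one modules $RG_\chi g_c$ with $\chi(g_c)\to-\infty$), together with the analogous statements for the Novikov ring. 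That bridge is the real content of the Bieri--Renz/Schweitzer theorem and is missing from your outline.
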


A proof of \cref{lem:sigmaChar} can be found in \cite[Theorem 5.3]{Fisher2021improved}. The result with $R = \Z$ follows from work of Schweitzer \cite[Appendix]{BieriDeficiency} and Bieri--Renz \cite{BieriRenzValutations}, and their arguments carry over without modification to the case of a general ring $R$.

In his thesis, Renz defined homotopical analogues $\prescript{*}{}{\Sigma}^m(G)$ of the $\Sigma$-invariants which we now describe. Let $G$ be of type $\F_m$ and suppose that $X$ is a $K(G,1)$ with finite $m$-skeleton. Let $\chi \colon G \longrightarrow \R$ be a character and let $Y \longrightarrow X$ be the covering space corresponding to $\ker \chi$. Then, it is possible to find a continuous height function $f \colon Y \longrightarrow \R$ such that $f(g \cdot y) = \chi(g) + f(y)$ for all $g \in G$ and $y \in Y$. Let $Y_{\chi,r} := \{ y \in Y : f(y) \geqslant -r \}$.

\begin{defn}[Homotopical $\Sigma$-invariants]
The $m$th homotopical $\Sigma$-invariant of $G$ is the subset $\prescript{*}{}{\Sigma}^m(G)$ of $S(G)$ defined by the following property: $[\chi] \in \prescript{*}{}{\Sigma}^m(G)$ if and only if  there is some $r \geqslant 0$ such that the inclusion-induced homomorphism $\pi_i(Y_{\chi,0}) \longrightarrow \pi_i(Y_{\chi, r})$ is trivial for $i < m$.
\end{defn}

Importantly for our purposes, \cref{lem:hmlgBNS} carries over to the homotopical context.

\begin{lem}[Renz] \label{lem:htpbns} Let $G$ be of type $\F_m$. Then
\begin{enumerate}
    \item $\prescript{*}{}{\Sigma}^m(G)$ is open in $S(G)$ for all $m \in \N$ \cite[Satz A]{RenzThesis};
    \item if $N \triangleleft G$ is a normal subgroup with Abelian quotient $G/N$, then $N$ is of type $\F_m$ if and only if $S(G,N) \subseteq \prescript{*}{}{\Sigma}^m (G)$ \cite[Satz C]{RenzThesis}.
\end{enumerate}
\end{lem}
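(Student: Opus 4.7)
The plan is to adapt Bieri--Renz's proof of \cref{lem:hmlgBNS} by replacing the algebraic filling of cycles with the topological filling of spheres, following Renz's thesis. Fix a $K(G,1)$-complex $X$ with finite $m$-skeleton, and for each character $\chi \colon G \to \R$ let $Y_\chi \to X$ be the cover with fundamental group $\ker\chi$, equipped with the equivariant height function $f_\chi$. The central geometric observation is that the values of $f_\chi$ on the zero-skeleton of $Y_\chi$ are determined by $\chi$ on a finite generating set of $G$, so on any finite subcomplex $K \subseteq Y_\chi$ the height functions $f_\chi$ and $f_{\chi'}$ of two nearby characters differ by at most $C \cdot \|\chi - \chi'\|$, where $C$ depends only on $K$.

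For part (1), suppose $[\chi_0] \in \prescript{*}{}{\Sigma}^m(G)$ with witness $r_0$. Finiteness of the $m$-skeleton lets me fix, once and for all, null-homotopies inside $(Y_{\chi_0})_{\chi_0, r_0}$ for the attaching maps of the finitely many $G$-orbits of $(i{+}1)$-cells ($i < m$). Any sphere in $(Y_{\chi_0})_{\chi_0,0}$ is assembled from translates of these, so its null-homotopy has bounded height deviation. For $\chi'$ sufficiently close to $\chi_0$, the same translates live in $(Y_{\chi'})_{\chi', r_0 + \varepsilon}$ for some $\varepsilon$ independent of the sphere, producing a uniform witness for $\chi'$ and hence openness of $\prescript{*}{}{\Sigma}^m(G)$.

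The forward direction of (2) is straightforward: if $N$ is of type $\F_m$, take a $K(N,1)$ with finite $m$-skeleton and build an equivariant $G$-model whose $m$-skeleton is $G/N$-cocompact. Any $\chi$ vanishing on $N$ factors through the abelian group $G/N$, so bounded level sets of $f_\chi$ are cocompact in the $N$-direction, providing uniform null-homotopies and $[\chi] \in \prescript{*}{}{\Sigma}^m(G)$.

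The main obstacle is the converse of (2). Assume $S(G,N) \subseteq \prescript{*}{}{\Sigma}^m(G)$. Since $S(G,N)$ is compact (a sphere of dimension $\mathrm{rank}(G/N) - 1$), openness from (1) supplies a finite cover $\{U_j\}$ of $S(G,N)$ with explicit witnesses $r_j$; setting $r = \max r_j$ gives a single uniform filling constant valid for every $[\chi] \in S(G,N)$. I would then mimic Renz's homotopical Brown criterion, inductively constructing an $N$-cocompact $(m{-}1)$-connected complex that serves as the $m$-skeleton of a $K(N,1)$: at each stage, generators of $\pi_i$ in the candidate complex are killed by attaching finitely many $N$-orbits of cells, using the uniform fillings coming from the finite cover of $S(G,N)$. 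The delicate point is that the induction must accommodate every character in $S(G,N)$ simultaneously while keeping the attached cells $N$-finite; this is where the compactness of $S(G,N)$ and the openness of $\prescript{*}{}{\Sigma}^m(G)$ are essential, and where the bulk of the technical work lies.
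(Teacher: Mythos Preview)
The paper does not prove \cref{lem:htpbns}; it is stated with direct citations to Renz's thesis (\cite[Satz~A, Satz~C]{RenzThesis}) and no argument is given in the paper itself. There is therefore no ``paper's own proof'' to compare your proposal against.

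As a standalone sketch your outline is in the right spirit---it follows the standard strategy of transporting the Bieri--Renz homological arguments to the homotopical setting via filling spheres rather than cycles, and you correctly identify the compactness of $S(G,N)$ together with the openness from part~(1) as the mechanism producing a uniform filling radius for the converse of~(2). That said, several steps are only gestures: in~(1) you should be working with the essential triviality of $\pi_i(Y_{\chi,0}) \to \pi_i(Y_{\chi,r})$ rather than null-homotopies of individual attaching maps, and your perturbation argument needs to track how an arbitrary sphere in $Y_{\chi',0}$ (not $Y_{\chi_0,0}$) gets filled; in the converse of~(2) the phrase ``mimic Renz's homotopical Brown criterion'' hides essentially the entire proof, and you have not said how the uniform fillings for the various $\chi \in S(G,N)$ are combined to build a single $N$-cocompact model. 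These are exactly the technical points Renz's thesis works out, so if you intend this as more than a pointer to that reference, substantially more detail is required.
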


\subsection{\texorpdfstring{$\ltwo$}{}-invariants and the Linnell skew-field}\label{subsec:l2}

We briefly summarise the construction of the Linnell ring of a group and explain how it is used to compute $\ltwo$-homology.

\begin{defn}[von Neumann algebra]\label{def:vonNalg}
Let $G$ be a countable group and let $\ell^2(G)$ denote the Hilbert space of square-summable complex series over $G$:
\[
    \ell^2(G) := \left\{  \sum_{g \in G} \lambda_g g \ : \ \lambda_g \in \C, \sum_{g \in G} \lvert\lambda_g\rvert^2 < \infty \right\}.
\]
The \textit{von Neumann algebra} $\mathcal{N}(G)$ of $G$ is the algebra of bounded $G$-equivariant operators on $\ltwo(G)$. Here $G$ acts on $\ltwo(G)$ by left multiplication and the operators act on the right.
\end{defn}

The elements of the group ring $\Q G$ act by right multiplication on $\ltwo(G)$ and therefore can be viewed as $G$-equivariant operators. One easily checks that these operators are bounded; thus, we have an embedding $\Q G \longhookrightarrow \mathcal{N}(G)$. An important fact about $\mathcal{N}(G)$ is that it satisfies the \textit{Ore condition} \cite[Theorem 8.22(1)]{Luck02}. We will not give a definition of the Ore condition; it will suffice to know that if a ring $R$ satisfies the Ore condition, then it can be embedded in a ring $\Ore(R)$ in such a way that all the non-zero divisors of $R$ are mapped to invertible elements of $\Ore(R)$. We refer the interested reader to \cite[Section 4.4]{PassmanGrpRng} for a detailed account of the Ore condition.

\begin{defn}[Linnell ring, Atiyah conjecture]\label{def:linnellring}
The \textit{Linnell ring} of a group $G$, denoted $\mathcal{D}(G)$ is the division closure of (the image of) $\Q G$ in $\Ore(\mathcal{N}(G))$. A torsion-free group $G$ satisfies the \textit{Atiyah conjecture} if $\mathcal{D}(G)$ is a skew-field, that is, if every nonzero element of $\mathcal{D}(G)$ is invertible.
\end{defn}

The formulation of the Atiyah conjecture given here is equivalent to the strong Atiyah conjecture over $\Q$. Linnell showed that the formulation in \cref{def:linnellring} implies the strong Atiyah conjecture over $\Q$, and the details of the reverse implication can be found in L\"uck's book \cite[Section 10]{Luck02}. We note here that it follows from work of Schick \cite{SchickL2Int2002} that RFRS groups (see \cref{def:RFRS}) satisfy the Atiyah conjecture.

\begin{defn}[$\ltwo$-homology and $\ltwo$-Betti numbers]
Let $G$ be a torsion-free group satisfying the Atiyah conjecture. We define the \textit{$\ltwo$-homology} of $G$ to be $H_\bullet(G;\mathcal{D}(G))$, where $\mathcal{D}(G)$ is viewed as a left $\Q G$-module. The $n$th \textit{$\ltwo$-Betti number} of $G$ is 
\[
    \btwo{n}(G) := \dim_{\mathcal{D}(G)} H_n(G;\mathcal{D}(G)),
\]
where we view $H_n(G;\mathcal{D}(G))$ as a left $\mathcal{D}(G)$-module.
\end{defn}

\begin{rem}
The $\ltwo$-Betti numbers of a group are well-defined, since modules over skew-fields have well-defined dimensions. We also remark that this is not the usual definition of $\ltwo$-invariants, though it is the one that will be used below. The definitions coincide by \cite[Lemma 10.28]{Luck02}.
\end{rem}

Let $G\ab = G/[G,G]$ be the Abelianisation of $G$.
\begin{defn} \label{def:RFRS}
A group $G$ is \textit{residually finite rationally solvable (RFRS)} if 
\begin{enumerate}
\itemsep-0.2em
\item there is a chain $G = G_0 \geqslant G_1 \geqslant G_2 \geqslant \cdots$ of finite-index normal subgroups of $G$ such that $\bigcap_{i=0}^\infty G_i = 1$;
\item $\ker(G_i \longrightarrow \Q \otimes G_i^{\mathrm{ab}}) \leqslant G_{i+1}$ for every $i \geqslant 0$.
\end{enumerate}
\end{defn}

The following theorem is the main ingredient in the proof of \cref{prop:fibring}. 

\begin{thm}[Kielak, Jaikin-Zapirain] \label{thm:kielak}
Let $G$ be a finitely generated RFRS group, let $\mathbb F$ be a skew-field, and let $n \in \N$. Let $C_\bullet$ denote a chain complex of free $\mathbb F G$-modules such that for every $p \leqslant n$ the module $C_p$ is finitely generated and $H_p(\mathcal D_{\mathbb F G} \otimes_{\mathbb F G} C_\bullet) = 0$. Then, there exist a finite-index subgroup $H \leqslant G$ and an open subset $U \subseteq S(H)$ such that
\begin{enumerate}
    \item the closure of $U$ contains $S(G)$;
    \item $U$ is invariant under the antipodal map;
    \item $H_p(\widehat{\mathbb F H}^\psi \otimes_{\mathbb F H} C_\bullet) = 0$ for every $p \leqslant n$ and every $\psi \in U$.
\end{enumerate}
\end{thm}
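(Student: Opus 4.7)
The plan is to follow Kielak's approach in the case $\mathbb{F} = \Q$ and extend it to arbitrary skew-fields using Jaikin-Zapirain's construction of a Hughes-free division ring $\mathcal{D}_{\mathbb{F} G}$ associated to any RFRS group $G$ and skew-field $\mathbb{F}$. The strategy is to produce $U$ as a union of open neighborhoods of rational characters on a fixed finite-index subgroup $H$, one for each rational direction on $G$, on each of which Novikov homology can be verified to vanish by comparison with the assumed vanishing over $\mathcal{D}_{\mathbb{F} G}$.

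First I would reduce to rational characters. The vanishing condition $H_p(\widehat{\mathbb{F} K}^\psi \otimes_{\mathbb{F} K} C_\bullet) = 0$ for $p \leqslant n$ defines an open subset of $S(K)$ for any finitely generated group $K$ and any complex $C_\bullet$ finitely generated in degrees $\leqslant n$, by the same argument that yields \cref{lem:hmlgBNS}(1). By compactness of $S(G)$ and density of rational directions, it therefore suffices to produce, for every rational $[\chi] \in H^1(G;\Q) \cap S(G)$, a finite-index subgroup $H_\chi \leqslant G$ and a rational character $\psi_\chi \in H^1(H_\chi;\Q)$ close to $\chi|_{H_\chi}$ at which the Novikov homology vanishes. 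Intersecting finitely many $H_\chi$ into a single $H$ (and pulling back the open neighborhoods along the finite-index change of Novikov rings, under which vanishing is preserved in both directions since $\widehat{\mathbb{F} H_\chi}^{\psi}$ is free of finite rank over $\widehat{\mathbb{F} H}^{\psi|_H}$) yields a single $H$ and a single open $U \subseteq S(H)$ whose closure contains $S(G)$.

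For the main step, fix a rational $[\chi]$ and pass deeper down the RFRS chain of $G$ to a finite-index subgroup $H$ such that $\chi|_H$ factors through a free abelian quotient of $H$ compatibly with the filtration; concretely, arrange that $\ker(\chi|_H)$ is itself RFRS and $H$ fits into a short exact sequence $1 \to N \to H \to \Z \to 1$ with $N$ RFRS. This gives a crossed-product decomposition $\mathbb{F} H \cong \mathbb{F} N \ast \Z$. The technical heart of the argument is now the identification, via Hughes-freeness, of $\mathcal{D}_{\mathbb{F} H}$ with the Ore localization of $\mathcal{D}_{\mathbb{F} N} \ast \Z$, together with the canonical embedding of this Ore localization into the twisted Laurent-series ring $\mathcal{D}_{\mathbb{F} N}((t))$, which sits inside $\widehat{\mathbb{F} H}^\chi$. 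The central claim to establish is that $\widehat{\mathbb{F} H}^\chi$ is a flat right $\mathcal{D}_{\mathbb{F} H}$-module.

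Granting flatness, the hypothesis $H_p(\mathcal{D}_{\mathbb{F} G} \otimes_{\mathbb{F} G} C_\bullet) = 0$ for $p \leqslant n$ restricts to $H_p(\mathcal{D}_{\mathbb{F} H} \otimes_{\mathbb{F} H} C_\bullet) = 0$ (since induction from $H$ to $G$ identifies $\mathcal{D}_{\mathbb{F} G}$ with a matrix ring over $\mathcal{D}_{\mathbb{F} H}$), and flat base change then gives the desired $H_p(\widehat{\mathbb{F} H}^\chi \otimes_{\mathbb{F} H} C_\bullet) = 0$. Antipodal invariance of $U$ is automatic, as the same argument applies verbatim to $-\chi$, with the opposite orientation of the Laurent series. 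The main obstacle is establishing the flat embedding $\mathcal{D}_{\mathbb{F} H} \hookrightarrow \widehat{\mathbb{F} H}^\chi$: Jaikin-Zapirain's characterization of $\mathcal{D}_{\mathbb{F} H}$ via iterated Ore localizations along the RFRS filtration is exactly the input needed, and it is the step where the RFRS hypothesis is truly essential.
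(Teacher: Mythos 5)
Your proposal has the right scaffolding (reduction to rational characters, the crossed-product decomposition $\mathbb F H \cong \mathbb F N \ast \Z$, Hughes-freeness identifying $\mathcal D_{\mathbb F H}$ with $\Ore(\mathcal D_{\mathbb F N} \ast \Z) \subseteq \mathcal D_{\mathbb F N}((t))$), but the step you yourself flag as the technical heart is false, and the containment you assert goes in the wrong direction. For $\chi \colon H \to \Z$ with kernel $N$, the Novikov ring is $\widehat{\mathbb F H}^\chi \cong (\mathbb F N)((t))$: Laurent series in $t$ whose coefficients are \emph{finite} sums over $N$. Thus $\widehat{\mathbb F H}^\chi$ sits inside $\mathcal D_{\mathbb F N}((t))$, not the other way around, and $\mathcal D_{\mathbb F H}$ does \emph{not} embed into $\widehat{\mathbb F H}^\chi$. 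Already for $H = \Z^2 = \langle s,t\rangle$ with $\chi(s)=1$, $\chi(t)=0$ one has $\mathcal D_{\mathbb F H} = \mathbb F(s,t)$ while $\widehat{\mathbb F H}^\chi = \mathbb F[t^{\pm 1}]((s))$, and $(1-t)^{-1}$ lies in the former but not the latter. (Flatness itself is a red herring: since $\mathcal D_{\mathbb F H}$ is a skew-field, any overring would automatically be free over it; the entire difficulty is the existence of a ring map $\mathcal D_{\mathbb F H} \to \widehat{\mathbb F H}^\psi$ under $\mathbb F H$, and such a map cannot exist for all $\psi$ --- otherwise every character of every RFRS group of type $\FP_n(\mathbb F)$ with vanishing $\mathcal D$-homology would be an algebraic fibration, which is false.)

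This failure is precisely why the conclusion of \cref{thm:kielak} only provides a dense open $U$ rather than all of $S(H)$, and it is the point where the actual argument diverges from yours. Kielak (and Jaikin-Zapirain for general $\mathbb F$) do not transport all of $\mathcal D_{\mathbb F H}$ into a Novikov ring; they prove that each \emph{individual} element $x \in \mathcal D_{\mathbb F G}$ is ``well representable'': for all sufficiently deep terms $G_i$ of the RFRS chain there is a \emph{rich} (open, dense in the relevant sense, antipodally symmetric) subset $U_i \subseteq H^1(G_i;\R)$ such that $x$ lies in $(\mathcal D_{\mathbb F G_i} \cap \widehat{\mathbb F G_i}^{\varphi})[G/G_i]$ inside a common overring, for every $\varphi \in U_i$ --- the rich set depends on $x$ (in the $\Z^2$ example, $(1-t)^{-1}$ is representable exactly when $\varphi(t)\neq 0$). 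One then applies this to the finitely many matrix entries of a partial chain contraction of $\mathcal D_{\mathbb F G}\otimes_{\mathbb F G} C_\bullet$ in degrees $\leqslant n$, using that rich sets are closed under finite intersection, to obtain a single $H = G_i$ and a single $U$. Your compactness reduction also does not quite deliver condition (1) as written (a finite union of small neighbourhoods of finitely many rational directions is not dense), but that is secondary. Note finally that the paper itself does not reprove this theorem: it cites Kielak for $\mathbb F = \Q$ and explains in a remark how Jaikin-Zapirain's well-representability theorem for $\mathcal D_{\mathbb F G}$ yields the general case; any self-contained proof would have to reproduce that mechanism, which your flat-embedding shortcut cannot replace.
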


Here, $\mathcal D_{\mathbb F G}$ is a skew-field containing $\mathbb F G$ known as the \textit{Hughes-free division ring} of $\mathbb FG$. Importantly for us, $\mathcal D_{\mathbb FG}$ exists for any skew-field $\mathbb F$ and $\mathcal D_{\Q G}$ coincides with $\mathcal D(G)$ whenever $G$ is a RFRS group \cite[Corollary 1.3]{JaikinUniversal2021}. The $\mathbb F = \Q$ case of \cref{thm:kielak} is due to Kielak \cite[Theorem 5.2]{KielakRFRS} and the generalisation to arbitrary skew-fields follows from Jaikin-Zapirain's appendix to \cite{JaikinUniversal2021}.

\begin{rem}
    Jaikin-Zapirain does not state \cref{thm:kielak} explicitly, so we outline how his results imply the generalisation of Kielak's theorem. We hope this extended remark will also make it easier for the interested reader to find the analogues of Kielak's results in Jaikin-Zapirain's appendix. Let $G$ be a RFRS group with a residual chain $(G_i)$ witnessing the RFRS property. Kielak begins by defining \textit{rich} subsets of $H^1(G_i; \R)$ and proves some of their basic properties in \cite[Lemmas 4.5, 4.5, 4.6]{KielakRFRS}. Jaikin-Zapirain uses a slightly more restrictive definition of rich subsets of $H^1(G_i; \R)$, though there is no essential difference and Kielak's proofs still hold in this setting (see \cite[Section 5.1]{JaikinUniversal2021}). 

    Kielak introduces the following central concept in \cite[Definition 4.7]{KielakRFRS}: An element $x \in \mathcal D(G)$ is \textit{well representable} if there is some $n \in \N$ and there are rich subsets $U_i \subseteq H^1(G_i;\R)$ for all $i \geqslant n$ such that $x$ can be interpreted as an element of the \textit{twisted} group ring $\widehat{\Q G_i}^\varphi[G/G_i]$ for every character $\varphi \in U_i$. The way to make this interpretation precise is by realising both $\widehat{\Q G_i}^\varphi$ and $\mathcal D(G_i)$ as subrings of a common larger ring and noting that $\mathcal D(G_i)[G/G_i] \cong \mathcal D(G)$, where $\mathcal D(G_i)[G/G_i]$ is a twisted group ring. Then $x$ is well representable if it lies in  $(\mathcal D(G_i) \cap \widehat{\Q H_i}^\varphi)[G/G_i]$.

    For an arbitrary skew-field $\mathbb F$, Jaikin-Zapirain analogously describes how to view $\widehat{\mathbb F G_i}^\varphi$ and $\mathcal D_{\mathbb F G_i}$ as subrings of a common ring in \cite[Section 5.2]{JaikinUniversal2021} and defines well representability of elements of $\mathcal D_{\mathbb F G}$ at the beginning of \cite[Section 5.5]{JaikinUniversal2021}. The main result (\cite[Theorem 4.13]{KielakRFRS} for $\mathbb F = \Q$ and \cite[Thorem 5.10]{JaikinUniversal2021} for arbitrary $\mathbb F$) is that every element of $\mathcal D_{\mathbb F G}$ is well representable. Equipped with this structure theorem for $\mathcal D_{\mathbb F G}$, Kielak's proof of \cref{thm:kielak} still holds after making the replacements $\mathbb Q \rightsquigarrow \mathbb F$ and $\mathcal D(G) \rightsquigarrow \mathcal D_{\mathbb F G}$.
\end{rem}

The last thing we will need to know about Hughes-free division rings is their relation to algebraic fibring of RFRS groups, given by the following result of the author. 

\begin{thm}[{\cite[Theorem 6.6]{Fisher2021improved}}]\label{thm:b2rfrs}
    Let $\mathbb F$ be a skew-field and let $G$ be a virtually RFRS group of type $\FP_n(\mathbb F)$. Then there is a finite-index subgroup $H \leqslant G$ admitting an epimorphism to $\Z$ with kernel of type $\FP_n(\mathbb F)$ if and only if $H_p(G; \mathcal{D}_{\mathbb F G}) = 0$ for all $p = 0, \dots, n$.
\end{thm}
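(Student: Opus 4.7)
The theorem is a biconditional; I address the two implications separately.

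$(\Leftarrow)$ Assume $H_p(G; \mathcal D_{\mathbb F G}) = 0$ for $p \leq n$. Choose a partial free $\mathbb F G$-resolution $C_\bullet \to \mathbb F$ with $C_i$ finitely generated for $i \leq n$, which is possible since $G$ is of type $\FP_n(\mathbb F)$. The hypothesis reads $H_p(\mathcal D_{\mathbb F G} \otimes_{\mathbb F G} C_\bullet) = 0$ for $p \leq n$. After passing to a finite-index RFRS subgroup (still of type $\FP_n(\mathbb F)$), apply \Cref{thm:kielak}: it produces a finite-index $H \leq G$ and an open, antipodally-symmetric, nonempty subset $U \subseteq S(H)$ such that $H_p(\widehat{\mathbb F H}^\psi \otimes_{\mathbb F H} C_\bullet) = 0$ for all $\psi \in U$ and $p \leq n$. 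Discrete characters are dense in $S(H)$, so pick $\varphi \colon H \twoheadrightarrow \Z$ with $[\varphi] \in U$; by antipodal symmetry $[-\varphi] \in U$ as well. \Cref{lem:sigmaChar} then gives $[\pm\varphi] \in \Sigma^n_{\mathbb F}(H;\mathbb F)$, and since $S(H,\ker\varphi) = \{[\pm\varphi]\}$, \Cref{lem:hmlgBNS}(2) concludes that $\ker\varphi$ is of type $\FP_n(\mathbb F)$.

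$(\Rightarrow)$ Conversely, suppose $H \leq G$ has finite index and $\varphi \colon H \twoheadrightarrow \Z$ has kernel $K$ of type $\FP_n(\mathbb F)$. By functoriality of Hughes-free division rings under finite-index inclusions, $\mathcal D_{\mathbb F G}$ is a free $\mathcal D_{\mathbb F H}$-module of rank $[G:H]$, so combining Shapiro's lemma with faithful flatness reduces the goal to showing $H_p(H; \mathcal D_{\mathbb F H}) = 0$ for $p \leq n$. Replacing $G$ with $H$, I may assume $G = K \rtimes_\varphi \Z$. Then \Cref{lem:hmlgBNS}(2) and \Cref{lem:sigmaChar} together translate the hypothesis on $K$ into the Novikov vanishing $H_p(G; \widehat{\mathbb F G}^{\pm\varphi}) = 0$ for $p \leq n$.

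The heart of the argument is to promote Novikov vanishing to $\mathcal D_{\mathbb F G}$-vanishing. I would exploit the identification $\mathcal D_{\mathbb F G} \cong \mathcal D_{\mathbb F K}((t;\tau))$, i.e.\ the twisted Laurent series field over $\mathcal D_{\mathbb F K}$, which is itself Hughes-free over $G = K \rtimes \Z$ and hence coincides with $\mathcal D_{\mathbb F G}$ by Hughes' uniqueness. Under this identification $\widehat{\mathbb F G}^{\varphi} \cong \mathbb F K((t;\tau))$ is naturally a subring of $\mathcal D_{\mathbb F G}$, and the goal becomes to show that $\mathcal D_{\mathbb F G}$ is flat as a right $\widehat{\mathbb F G}^{\varphi}$-module. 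Once this is established, change of rings yields $H_p(G; \mathcal D_{\mathbb F G}) \cong H_p(G; \widehat{\mathbb F G}^{\varphi}) \otimes_{\widehat{\mathbb F G}^{\varphi}} \mathcal D_{\mathbb F G} = 0$, completing the proof.

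The flatness of $\mathcal D_{\mathbb F G}$ over $\widehat{\mathbb F G}^{\varphi}$ is the principal obstacle. Morally it follows from flatness of $\mathcal D_{\mathbb F K}$ over $\mathbb F K$ (which holds because $\mathcal D_{\mathbb F K}$ is a division ring containing $\mathbb F K$, available by the RFRS hypothesis via Jaikin-Zapirain's construction) together with the claim that this flatness is preserved by the twisted Laurent series construction on both sides. Verifying this preservation — essentially a statement that completing coefficients at $t$ is compatible with flatness — is the technical core of the theorem, and is where the RFRS hypothesis enters in an essential way.
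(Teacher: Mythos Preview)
The paper does not prove \cref{thm:b2rfrs}; it is quoted verbatim from \cite[Theorem 6.6]{Fisher2021improved} and used as a black box. So there is no ``paper's own proof'' to compare against, only the question of whether your sketch stands on its own.

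Your $(\Leftarrow)$ direction is correct and is indeed the standard argument: apply \cref{thm:kielak}, pick a rational point in the resulting open set, and invoke \cref{lem:sigmaChar} and \cref{lem:hmlgBNS}. This is essentially how the proof in \cite{Fisher2021improved} runs.

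Your $(\Rightarrow)$ direction, however, has real problems beyond the incompleteness you flag. First, the identification $\mathcal D_{\mathbb F G} \cong \mathcal D_{\mathbb F K}((t;\tau))$ is not correct: the Hughes-free division ring of $\mathbb F[K \rtimes \Z]$ is the Ore field of the twisted Laurent \emph{polynomial} ring $\mathcal D_{\mathbb F K}[t^{\pm 1};\tau]$, not the Laurent series field; the latter is strictly larger and is not generated as a division ring by $\mathbb F G$. Second, and more seriously, your justification that $\mathcal D_{\mathbb F K}$ is flat over $\mathbb F K$ ``because $\mathcal D_{\mathbb F K}$ is a division ring containing $\mathbb F K$'' is false reasoning: a division overring is not automatically flat. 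Whether $\mathcal D_{\mathbb F K}$ is flat over $\mathbb F K$ is a genuinely deep question, not a triviality to be invoked in passing. The actual argument in \cite{Fisher2021improved} avoids this entirely: with the correct description $\mathcal D_{\mathbb F G} = \Ore(\mathcal D_{\mathbb F K}[t^{\pm 1};\tau])$ one computes $H_p(G;\mathcal D_{\mathbb F K}[t^{\pm 1};\tau])$ via the Wang sequence, observes that it is a torsion $\mathcal D_{\mathbb F K}[t^{\pm 1};\tau]$-module because $H_p(K;\mathcal D_{\mathbb F K})$ is finite-dimensional over $\mathcal D_{\mathbb F K}$ (this is where $\FP_n$ of $K$ is used), and then kills it by the flat Ore localisation to $\mathcal D_{\mathbb F G}$. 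No flatness of $\mathcal D_{\mathbb F K}$ over $\mathbb F K$ is needed.
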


\begin{rem}\label{rem:notation}
    There is a slight abuse of notation in the statement of \cref{thm:b2rfrs}. Namely, $\mathcal D_{\mathbb F G}$ may not exist when $G$ is only \textit{virtually} RFRS. For example, if $G$ has torsion then $\mathbb F G$ does not embed into any skew-field. However, if $H$ is RFRS, then $\mathcal D_{\mathbb F H}$ does exist and the Betti numbers $b_p(H; \mathcal D_{\mathbb FH}) = \dim_{\mathcal D_{\mathbb FH}} H_p(H; \mathcal{D}_{\mathbb F H})$ satisfy 
    \[
        b_p(H'; \mathcal D_{\mathbb FH'}) = [H:H'] \cdot b_p(H; \mathcal D_{\mathbb FH})
    \]
    whenever $H'$ is a finite-index subgroup of $H$ \cite[Lemma 6.3]{Fisher2021improved}. Thus, when $G$ is virtually RFRS, we interpret $H_p(G; \mathcal{D}_{\mathbb F G}) = 0$ to mean that $G$ has some finite-index RFRS subgroup $H$ such that $H_p(H; \mathcal{D}_{\mathbb F H}) = 0$.
\end{rem}

\section{Main results}\label{sec:algFib}

The manifolds $M^n$ constructed by Italiano--Martelli--Migliorini in \cite{italiano2021hyperbolic} are special cases of the following general construction, introduced by Vesnin in \cite{Vesnin87}. Let $P$ be an $n$-dimensional finite-volume right-angled polyhedron in $\mathbb{H}^n$ with real and ideal vertices and let $\Gamma$ be the right-angled Coxeter group generated by the reflections across the $(n-1)$-cells of $P$. This is the group with presentation
\[
    \pres{ F_1, \dots, F_k }{ [F_i, F_j] \ \text{if and only if $F_i$ and $F_j$ meet along an $(n-2)$-cell}}
\]
where the generators $F_i$ range over the $(n-1)$-cells of $P$. Colour the $(n-1)$-cells using the set of colours $\{1,2, \dots, m\}$ so that adjacent cells have different colours and fix a generating set $\{e_1, \dots, e_m\}$ of $(\Z/2)^m$. There is then a homomorphism $\varphi \colon \Gamma \longrightarrow (\Z/2)^m$ determined by $F_i \longmapsto e_c$ if and only if the colour of $F_i$ is $c$. Then $\Gamma' := \ker \varphi$ acts freely on $\mathbb{H}^n$ and $M = \mathbb{H}^n / \Gamma'$ is a finite-volume hyperbolic $n$-manifold.

Note that $\pi_1(M) = \Gamma'$ is a subgroup of a right-angled Coxeter group and is therefore virtually RFRS by \cite[Theorem 2.2]{AgolCritVirtFib}. Moreover, $\pi_1(M)$ is a lattice in $\Isom(\mathbb{H}^n)$ and therefore its $\ell^2$-Betti numbers vanish except in the middle dimension \cite[Corollary 5.16(1)]{Luck02}. Since the manifolds $M^n$ of \cite{italiano2021hyperbolic} are special cases of the above construction, we have the following facts.

\begin{lem}\label{lem:mfldRFRS}
    For $5 \leqslant n \leqslant 8$,
    \begin{enumerate}
        \item $\pi_1(M^n)$ is virtually RFRS;
        \item $\btwo{p}(\pi_1(M^n)) = 0$ if $p \neq n/2$ and $\btwo{n/2}(\pi_1(M^n)) \neq 0$ for $n = 6,8$.
    \end{enumerate}
\end{lem}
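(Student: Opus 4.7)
The plan is to assemble the observations that the author has already laid out in the paragraph preceding the lemma, together with a standard Chern--Gauss--Bonnet computation for the non-vanishing claim in dimensions $6$ and $8$. Both statements reduce to citing known results about right-angled Coxeter groups and about lattices in $\Isom(\mathbb H^n)$.

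For statement (1), I would observe that by the Italiano--Martelli--Migliorini construction (as recalled above), $\pi_1(M^n) = \Gamma'$ is a finite-index normal subgroup of the right-angled Coxeter group $\Gamma$ generated by the reflections across the facets of the polytope $P^n$. By Agol's theorem \cite[Theorem 2.2]{AgolCritVirtFib}, every right-angled Coxeter group is virtually RFRS, and since being virtually RFRS passes to subgroups, the result follows.

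For statement (2), I would use that $\pi_1(M^n)$ is a torsion-free lattice in $\Isom(\mathbb H^n)$ (as $M^n$ is a finite-volume hyperbolic $n$-manifold). The proportionality principle together with the computation of $\ell^2$-Betti numbers of $\mathbb H^n$ (which vanish outside the middle dimension) yields $\btwo{p}(\pi_1(M^n)) = 0$ for all $p \neq n/2$; I would quote \cite[Theorem 5.12, Corollary 5.16(1)]{Luck02} directly.

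The only step requiring more than a citation is the non-vanishing $\btwo{n/2}(\pi_1(M^n)) \neq 0$ for $n = 6, 8$. Here I would note first that $M^n$ is homotopy equivalent to a compact manifold with boundary obtained by truncating its cusps, so $\pi_1(M^n)$ is of type $\F$ and its Euler characteristic equals $\chi(M^n)$. By the Chern--Gauss--Bonnet theorem for finite-volume hyperbolic manifolds, $\chi(M^n)$ is a nonzero multiple of $\mathrm{vol}(M^n)$ whenever $n$ is even. On the other hand, Atiyah's formula gives $\chi(\pi_1(M^n)) = \sum_p (-1)^p \btwo{p}(\pi_1(M^n))$, and combining this with the vanishing from the previous step yields $\btwo{n/2}(\pi_1(M^n)) = (-1)^{n/2} \chi(M^n) \neq 0$. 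The main (minor) obstacle is simply locating a clean reference for Chern--Gauss--Bonnet applied to cusped finite-volume hyperbolic manifolds, but this is standard.
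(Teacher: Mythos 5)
Your proposal is correct and matches the paper's argument, which likewise deduces (1) from the fact that $\pi_1(M^n)$ is a subgroup of the right-angled Coxeter group generated by reflections in the facets of $P^n$ together with \cite[Theorem 2.2]{AgolCritVirtFib}, and (2) from $\pi_1(M^n)$ being a lattice in $\Isom(\mathbb{H}^n)$ via \cite[Corollary 5.16(1)]{Luck02}. The only difference is that your Chern--Gauss--Bonnet/Atiyah argument for the non-vanishing of $\btwo{n/2}$ in even dimensions is unnecessary: the cited Corollary 5.16(1) of L\"uck already asserts positivity of the middle $\ell^2$-Betti number for even-dimensional finite-volume hyperbolic manifolds, so that step is a valid but redundant re-derivation.
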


We note the following immediate application of \cref{thm:b2rfrs} and \cref{lem:mfldRFRS} to algebraic fibring of $M^6$, the $6$-dimensional, finite-volume, cusped, hyperbolic manifold constructed in \cite{italiano2021hyperbolic}.

\begin{cor}\label{cor:6mfld}
    There is a finite cover $X^6$ of $M^6$ that algebraically fibres with kernel of type $\mathtt{FP}_2(\Q)$. Moreover, no finite cover of $M^6$ algebraically fibres with kernel of type $\mathtt{FP}_3(\Q)$.
\end{cor}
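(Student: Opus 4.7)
The plan is a direct application of \cref{thm:b2rfrs} in both directions, using the input from \cref{lem:mfldRFRS}.

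For the first assertion, I would first observe that since $M^6$ is a finite-volume cusped hyperbolic manifold it deformation retracts onto a compact aspherical submanifold with boundary, so $\pi_1(M^6)$ is of type $\F$ and in particular of type $\FP_2(\Q)$; it is also torsion-free and virtually RFRS by \cref{lem:mfldRFRS}(1). By \cref{lem:mfldRFRS}(2), $\btwo{p}(\pi_1(M^6)) = 0$ for $p = 1, 2$, and $\btwo{0}(\pi_1(M^6)) = 0$ since $\pi_1(M^6)$ is infinite. Because $\mathcal D(\pi_1(M^6))$ is a skew-field, vanishing of these $\ell^2$-Betti numbers is equivalent to $H_p(\pi_1(M^6); \mathcal D_{\Q \pi_1(M^6)}) = 0$ for $p = 0, 1, 2$. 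Applying \cref{thm:b2rfrs} with $\mathbb F = \Q$ and $n = 2$ then produces a finite-index subgroup $H \leqslant \pi_1(M^6)$ and an epimorphism $H \longrightarrow \Z$ whose kernel is of type $\FP_2(\Q)$. The corresponding finite cover $X^6 \longrightarrow M^6$ is then the desired manifold.

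For the second (obstruction) assertion, I would argue by contradiction: suppose $X$ is a finite cover of $M^6$ whose fundamental group $\pi_1(X)$ admits an epimorphism $\pi_1(X) \longrightarrow \Z$ with kernel of type $\FP_3(\Q)$. Since $\pi_1(X)$ is itself a finite-index subgroup of $\pi_1(X)$, the "only if" direction of \cref{thm:b2rfrs} applied with $G = \pi_1(X)$, $\mathbb F = \Q$, and $n = 3$ forces $H_p(\pi_1(X); \mathcal D_{\Q \pi_1(X)}) = 0$ for all $p \leqslant 3$; in particular $\btwo{3}(\pi_1(X)) = 0$. But multiplicativity of $\ell^2$-Betti numbers under finite covers gives
\[
    \btwo{3}(\pi_1(X)) = [\pi_1(M^6) : \pi_1(X)] \cdot \btwo{3}(\pi_1(M^6)),
\]
which is nonzero by \cref{lem:mfldRFRS}(2). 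This is the contradiction.

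There is no real obstacle here: \cref{thm:b2rfrs} packages everything needed, and the only small bookkeeping point is checking that $\pi_1(M^6)$ is torsion-free (automatic since it acts freely on $\mathbb H^6$) so that $\mathcal D_{\Q \pi_1(M^6)}$ honestly exists and we are not forced to descend to a RFRS subgroup as in \cref{rem:notation}. Note also that this argument does not use \cref{prop:fibring}, because only a single algebraic fibration with a single finiteness property is being produced.
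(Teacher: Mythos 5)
Your argument is correct and is exactly the route the paper intends: the corollary is stated there as an immediate application of \cref{thm:b2rfrs} and \cref{lem:mfldRFRS}, with the positive direction coming from the vanishing of $\btwo{p}(\pi_1(M^6))$ for $p\leqslant 2$ and the obstruction from $\btwo{3}(\pi_1(M^6))\neq 0$ together with multiplicativity under finite-index subgroups. Your bookkeeping (type $\F$ from the compact core, torsion-freeness, and the contradiction via the ``only if'' direction of \cref{thm:b2rfrs}) fills in precisely what the paper leaves implicit.
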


\begin{prop}\label{prop:fibring}
    Let $G$ be a virtually RFRS group and let $\mathbb F_1, \dots, \mathbb F_k$ be skew-fields. Assume that
    \begin{enumerate}
        \item $G$ is of type $\mathtt F_m$ and of type $\mathtt{FP}_{n_i}(\mathbb F_i)$ for some $m, n_1, \dots, n_k \in \N$;
        \item for $i = 0, 1, \dots, k$ there are finite-index subgroups $H_i \leqslant G$ and epimorphisms $\varphi_i \colon H_i \longrightarrow \Z$ such that $\ker \varphi_0$ is of type $\mathtt F_m$ and $\ker \varphi_i$ is of type $\mathtt{FP}_{n_i}(\mathbb F_i)$ for each $i = 1, \dots, k$.  
    \end{enumerate}
    Then there is a finite-index subgroup $H \leqslant G$ and an epimorphism $\varphi \colon H \longrightarrow \Z$ such that $\ker \varphi$ is of type $\mathtt F_m$ and of type $\mathtt{FP}_{n_i}(\mathbb F_i)$ for each $i = 1, \dots, k$.
\end{prop}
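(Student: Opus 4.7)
The plan is to apply Theorem \ref{thm:kielak} with each of the skew-fields $\mathbb{F}_i$ to produce open subsets of the character sphere of a common finite-index subgroup $H \leqslant G$ along which the various $\Sigma$-invariants are realized, and then to combine these with an open neighborhood inside the homotopical $\Sigma$-invariant witnessed by $\varphi_0$.

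At the outset I would pass to a finite-index RFRS subgroup of $G$ and relabel it $G$; the conclusion descends to the original group. For each $i = 1, \dots, k$, Theorem \ref{thm:b2rfrs} applied to $\varphi_i$ yields $H_p(G; \mathcal{D}_{\mathbb{F}_i G}) = 0$ for all $p \leqslant n_i$, and by Remark \ref{rem:notation} the same vanishing holds on any finite-index subgroup. For each such $i$ I would apply Theorem \ref{thm:kielak}, taking as chain complex a partial free resolution of $\mathbb{F}_i$ over $\mathbb{F}_i G$ through degree $n_i$. Intersecting the finite-index subgroups that the $k$ applications of Theorem \ref{thm:kielak} produce with $\bigcap_{j=0}^{k} H_j$, I obtain a single $H \leqslant G$ of finite index together with open subsets $U_1, \dots, U_k \subseteq S(H)$, each antipodally invariant, each with closure containing $S(G)$, and, by Lemma \ref{lem:sigmaChar}, each contained in $\Sigma^{n_i}_{\mathbb{F}_i}(H; \mathbb{F}_i)$.

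For the homotopical side, since $\ker \varphi_0$ is of type $\F_m$, Lemma \ref{lem:htpbns}(2) gives $\pm[\varphi_0|_H] \in \prescript{*}{}{\Sigma}^m(H)$, and by Lemma \ref{lem:htpbns}(1) there is an open neighborhood $V \subseteq \prescript{*}{}{\Sigma}^m(H)$ of $\pm[\varphi_0|_H]$. It suffices to produce a rational character $\psi \in V \cap \bigcap_{i=1}^{k} U_i$: scaling $\psi$ to be integer-valued yields an epimorphism $\varphi \colon H \longrightarrow \Z$ whose kernel is simultaneously of type $\F_m$ and of type $\FP_{n_i}(\mathbb{F}_i)$ for every $i$, by Lemmas \ref{lem:hmlgBNS}(2) and \ref{lem:htpbns}(2).

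The main obstacle is the nonemptiness of $V \cap \bigcap_{i=1}^{k} U_i$. The condition $\overline{U_i} \supseteq S(G)$ alone only ensures that each $U_i$ individually meets $V$, which is not enough. The way around this is to unpack the construction of Kielak's open sets: each $U_i$ is built as the union of preimages of ``rich'' open subsets of the first cohomologies of members of the RFRS chain, whose complements are countable unions of rational hyperplanes. Hence $\bigcup_{i=1}^{k} (S(H) \setminus U_i)$ remains a meager subset of $S(H)$, and a Baire category argument yields density (in particular nonemptiness) of $\bigcap_{i=1}^{k} U_i$ inside $V$; the density of rational characters in $S(H)$ then finishes the construction.
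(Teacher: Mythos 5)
Your overall strategy --- realise each finiteness property as an open subset of a common character sphere and find a rational character in the intersection --- is the right one, and your treatment of the homotopical part via $\prescript{*}{}{\Sigma}^m$ and the final passage from a rational character to an epimorphism onto $\Z$ are fine. But the step you yourself flag as the main obstacle is a genuine gap, and your proposed fix does not close it as written. First, the $k$ applications of \cref{thm:kielak} produce open sets $U_i \subseteq S(H^{(i)})$ for \emph{a priori different} finite-index subgroups $H^{(i)}$; you cannot simply ``intersect the subgroups'' to view all the $U_i$ as open subsets of $S(H)$ for $H = \bigcap_i H^{(i)}$, because a subset of $S(H^{(i)})$ sits inside $S(H)$ only as a subset of a (generally proper) subsphere, and the vanishing conclusion of \cref{thm:kielak} says nothing about characters of $H$ that do not extend to $H^{(i)}$. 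Second, even granting a common $H$, the claim that each $S(H)\setminus U_i$ is meager (so that Baire gives density of $\bigcap_i U_i$) is not part of the statement of \cref{thm:kielak}; justifying it means reopening the proofs of Kielak and Jaikin-Zapirain and re-establishing that the relevant ``rich'' sets can all be taken in $H^1(G_j;\R)$ for a single sufficiently deep term $G_j$ of the RFRS chain, together with the closure of rich sets under finite intersections. That is a substantial amount of unwritten argument, and your description of the complements as ``countable unions of rational hyperplanes'' is not something you can cite.

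The paper avoids all of this by inducting on $k$, so that only \emph{one} application of \cref{thm:kielak} is ever in play at a time. At each step the finiteness properties already achieved by the inductively obtained fibration $\varphi'$ are encoded not by Kielak open sets but by the invariants $\prescript{*}{}{\Sigma}^m(H) \cap \bigcap_{i<k}\Sigma^{n_i}_{\mathbb F_i}(H;\mathbb F_i)$, which are open in $S(H)$ for the \emph{actual} subgroup $H$ produced by the single application of \cref{thm:kielak} (by \cref{lem:hmlgBNS,lem:htpbns}), and which contain $[\pm\varphi'|_H]$ because finiteness properties pass to finite-index subgroups of the kernel. Since $[\varphi'|_H] \in S(G) \subseteq \overline{U}$, any open neighbourhood $V$ of $[\pm\varphi'|_H]$ inside this intersection meets $U$, and that single nonempty intersection $U \cap V$ is all that is needed. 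If you want to salvage your parallel approach you would need to prove a strengthened, ``simultaneous'' version of \cref{thm:kielak}; otherwise, replace the Baire argument with this induction.
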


\begin{proof}
    We proceed by induction on $k$, with the base case $k = 0$ being trivial. Suppose that $k > 0$. By induction, there is a finite-index subgroup $H' \leqslant G$ and an epimorphism $\varphi' \colon H' \longrightarrow \Z$ with $\ker \varphi$ of type $\mathtt F_m$ and of type $\mathtt{FP}_{n_i}(\mathbb F_i)$ for each $i = 1, \dots, k-1$. Moreover, there is a finite-index subgroup $H_k \leqslant G$ and an epimorphism $\varphi_k \colon H_k \longrightarrow \Z$ with kernel of type $\mathtt{FP}_{n_k}(\mathbb F_k)$. By passing to finite-index subgroups if necessary, we will assume that $G = H' = H_k$ and that $G$ is RFRS.

    Since $G$ is of type $\mathtt{FP}_{n_k}(\mathbb F_k)$, there is a free resolution $C_\bullet \longrightarrow \mathbb F_k$ by $\mathbb F_k G$-modules such that $C_p$ is finitely generated for all $p \leqslant n_k$. Moreover, by \cref{thm:b2rfrs},
    \[
        H_p(G; \mathcal D_{\mathbb F_k G}) = H_p(\mathcal D_{\mathbb F_k G} \otimes_{\mathbb F_k G} C_\bullet) = 0
    \]
    for $p \leqslant n_k$. By \cref{thm:kielak}, there is a finite-index subgroup $H \leqslant G$ and an open set $U \subseteq S(H)$ satisfying $S(G) \subseteq \overline{U}$, $U = -U$, and $H_p(\widehat{\mathbb F_k H}^\psi \otimes_{\mathbb F_k H} C_\bullet) = 0$ for all $p \leqslant n_k$ and all $\psi \in U$.

    Let $\varphi := \varphi'|_H$. Then $\varphi' \in S(G) \subseteq \overline{U}$ and $\ker \varphi$ has all the finiteness properties of $\ker \varphi'$ because $H$ is a finite-index subgroup of $G$. By Lemmas \ref{lem:hmlgBNS} and \ref{lem:htpbns},
    \[
        S(H, \ker \varphi) = \{ [\pm \varphi] \} \subseteq \prescript{*}{}{\Sigma}^m (H) \cap \bigcap_{i=1}^{k-1} \Sigma_{\mathbb F_i}^{n_i} (H, \mathbb F_i) =: \Sigma,
    \]
    and since $\Sigma$ is open in $S(H)$, there is an antipodally symmetric open neighbourhood $V \subseteq \Sigma$ containing $\{[\pm \varphi]\}$. Since $S(G) \subseteq \overline{U}$ and $U$, $V$ are antipodally symmetric open sets, there is an epimorphism $\psi \colon H \longrightarrow \Z$ such that $\{[\pm \psi]\} \subseteq U \cap V$.

    We claim that $\ker \psi$ has the desired finiteness properties. First,
    \[
        \{[\pm \psi]\} = S(H, \ker \psi) \subseteq V \subseteq \prescript{*}{}{\Sigma}^m (H) \cap \bigcap_{i=1}^{k-1} \Sigma_{\mathbb F_i}^{n_i} (H, \mathbb F_i),
    \]
    so $\ker \psi$ is of type $\mathtt F_m$ by \cref{lem:htpbns} and of type $\mathtt{FP}_{n_i}(\mathbb F_i)$ for all $i = 1, \dots, k-1$ by \cref{lem:hmlgBNS}. Second, since $\{[\pm \psi]\} \subseteq U$, we have $\Tor_p^{\mathbb F_k H}(\widehat{\mathbb F_k H}^{\pm \psi}, \mathbb F_k) = 0$ for $p \leqslant n_k$ and therefore $[\pm \psi] \in \Sigma_{\mathbb F_k}^{n_k}(H; \mathbb F_k)$ by \cref{lem:sigmaChar}. But then $\ker \psi$ is of type $\mathtt{FP}_{n_k}(\mathbb F_k)$ by \cref{lem:hmlgBNS}.
\end{proof}

We are now ready to prove the main result.

\begin{thm}\label{thm:mfldFib}
    There exists a finite-volume, cusped, hyperbolic $7$-manifold $X^7$ that algebraically fibres with finitely presented kernel of type $\mathtt{FP}(\Q)$. The manifold $X^7$ is a finite cover of the manifold $M^7$ constructed in \cite{italiano2021hyperbolic}.
\end{thm}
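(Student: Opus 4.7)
The plan is to combine two different (virtual) algebraic fibrations of $G := \pi_1(M^7)$ via \cref{prop:fibring}. The first is furnished by Italiano--Martelli--Migliorini, whose theorem recalled in the introduction shows that $G$ itself algebraically fibres with finitely presented kernel, i.e.\ kernel of type $\F_2$. Call this fibration $\varphi_0 \colon G \longrightarrow \Z$.

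To produce the second fibration, I would apply \cref{thm:b2rfrs} with $\mathbb F = \Q$. By \cref{lem:mfldRFRS}, $G$ is virtually RFRS, and since $n = 7$ is odd so that no integer equals $n/2$, every $\ltwo$-Betti number $\btwo{p}(G)$ vanishes. Because $M^7$ is a finite-volume cusped hyperbolic manifold, it is aspherical and homotopy equivalent to a compact manifold with boundary, which deformation retracts onto a spine of dimension $n - 1 = 6$; hence $G$ is of type $\F$ with $\cd_\Q G \leqslant 6$. Consequently all hypotheses of \cref{thm:b2rfrs} are satisfied with $n = 6$, yielding a finite-index subgroup $H_1 \leqslant G$ and an epimorphism $\varphi_1 \colon H_1 \longrightarrow \Z$ whose kernel is of type $\FP_6(\Q)$.

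I would then apply \cref{prop:fibring} with $m = 2$, $k = 1$, $\mathbb F_1 = \Q$, $n_1 = 6$, $H_0 = G$, and the fibrations $\varphi_0, \varphi_1$ above. The hypotheses are in place: $G$ is virtually RFRS and of types $\F_2$ (in fact $\F$) and $\FP_6(\Q)$, while the kernels of $\varphi_0$ and $\varphi_1$ have the required finiteness types. The conclusion produces a finite-index subgroup $H \leqslant G$ and an epimorphism $\varphi \colon H \longrightarrow \Z$ whose kernel $K$ is simultaneously of type $\F_2$ and of type $\FP_6(\Q)$. Since $\cd_\Q K \leqslant \cd_\Q H \leqslant 6$, a standard truncation argument (the degree-$6$ kernel in any $\FP_6(\Q)$ resolution is automatically finitely generated projective once $\cd_\Q = 6$) upgrades $\FP_6(\Q)$ to $\FP(\Q)$. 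Taking $X^7$ to be the finite cover of $M^7$ corresponding to $H$ yields the desired manifold.

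I do not anticipate a genuinely hard step: the content of the proof is entirely packaged into \cref{lem:mfldRFRS}, \cref{thm:b2rfrs}, \cref{prop:fibring}, and the IMM theorem, and the strategy is already indicated in the introduction. The only delicate point is the cohomological dimension bound $\cd_\Q G \leqslant 6$, which is essential for promoting the output $\FP_6(\Q)$ to $\FP(\Q)$; this needs to be stated explicitly but follows immediately from the existence of a $6$-dimensional spine for the truncated manifold.
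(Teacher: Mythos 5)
Your proposal is correct and follows essentially the same route as the paper: combine the Italiano--Martelli--Migliorini fibration (type $\F_2$ kernel) with a virtual fibration of type $\FP_n(\Q)$ obtained from \cref{thm:b2rfrs} and the vanishing of the $\ell^2$-Betti numbers, merge them via \cref{prop:fibring}, and upgrade $\FP_n(\Q)$ to $\FP(\Q)$ using finite cohomological dimension (your ``truncation argument'' is exactly \cite[VIII Proposition 6.1]{BrownGroupCohomology}, which the paper cites). The only cosmetic difference is that you fix the explicit bound $\cd_\Q G \leqslant 6$ via the spine, whereas the paper simply takes $n \geqslant \cd(\ker\varphi)$ arbitrarily large.
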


\begin{proof}
    Let $G = \pi_1(M^7)$. Note that $G$ is of type $\mathtt F$ and, by \cref{lem:mfldRFRS}, $G$ is virtually RFRS and $\btwo{p}(G) = 0$ for all $p$. By \cref{thm:b2rfrs}, $G$ virtually algebraically fibres with kernel of type $\mathtt{FP}_m(\Q)$ for $m$ arbitrarily large. Since $G$ has finite cohomological dimension, so do all of its subgroups. Therefore, taking $m \geqslant \operatorname{cd}(G)$ guarantees that $G$ is virtually algebraically fibred with kernel of type $\FP(\Q)$ \cite[VIII Proposition 6.1]{BrownGroupCohomology}. From \cref{prop:fibring} applied to the case $k = 1$ and $\mathbb F_1 = \Q$ and the fact that $G$ algebraically fibres with finitely presented kernel \cite[Theorem 1]{italiano2021hyperbolic}, we conclude that there is a finite-index subgroup $H \leqslant G$ and an epimorphism $\varphi \colon H \longrightarrow \Z$ such that $\ker \varphi$ is finitely presented and of type $\mathtt{FP}_n(\Q)$ for $n$ arbitrarily large. But again, $H$ has finite cohomological dimension and therefore so does $\ker \varphi$. Thus $\ker \varphi$ is of type $\mathtt{FP}(\Q)$, provided we take $n \geqslant \operatorname{cd}(\ker \varphi)$. Take $X^7$ to be the covering space of $M^7$ corresponding to $H$. \qedhere
\end{proof}

\begin{rem}
    Recall that if a group is finitely presented and of type $\mathtt{FP}_n(\Z)$, then it is of type $\mathtt F_n$. Promoting the algebraic fibration of $\pi_1(X^7)$ to one with kernel of type $\mathtt F$ would get us very close to proving that $M^7$ virtually fibres over the circle, thanks to the work of Farrell \cite{FarrellFibring1971}. However, it is not even true in general that finitely presented groups of type $\mathtt{FP}(\Q)$ are of type $\mathtt{FP}_\infty(\Z)$. For instance, let $L$ be a simply-connected $\Q$-acyclic flag complex with nonzero integral homology. Then the Bestvina--Brady group $BB_L$ is finitely presented and of type $\mathtt{FP}(\Q)$, but $BB_L$ is not of type $\mathtt{FP}_\infty(\Z)$ by \cite[Main Theorem]{BestvinaBrady}. Thus, \cref{thm:mfldFib} does not imply the existence of kernels with higher homotopical finiteness properties than $\mathtt F_2$.
\end{rem}

\bibliographystyle{alpha}
\bibliography{bib}

\end{document}